\newtheorem{thm}{Theorem}[section]
\newtheorem{prop}[thm]{Proposition}
\newtheorem{examp}[thm]{Example}
 \theoremstyle{definition}
\theoremstyle{remark}
\newtheorem{rmk}[thm]{Remark}
\numberwithin{equation}{subsection}
\numberwithin{enumi}{subsection}
\newcommand{\Rmnum}[1]{\expandafter\@slowromancap\romannumeral #1@}
\author[Andrew Donald]{Andrew Donald}
\thanks{}
\address {Department of Mathematics, Michigan State University, East Lansing, MI 48824}
\email{adonald@math.msu.edu}
\author[Faramarz Vafaee]{Faramarz Vafaee}
\thanks{}
\address {Mathematics Department, California Institute of Technology, Pasadena, CA 91125}
\email{vafaee@caltech.edu}
\begin{document}
\title{A slicing obstruction from the $\frac{10}{8}$ theorem}

\begin{abstract}
From Furuta's $\frac{10}{8}$ theorem, we derive a smooth slicing obstruction for knots in $S^3$ using a spin $4$-manifold whose boundary is $0$-surgery on a knot. We show that this obstruction is able to detect torsion elements in the smooth concordance group and find topologically slice knots which are not smoothly slice.

\end{abstract}
\maketitle

\section{Introduction}\label{section:1}

A knot $K$ in $S^3$ is \emph{smoothly slice} if it bounds a disk that is smoothly embedded in the four-ball. Although detecting whether or not a knot is slice is not typically an easy task to do, there are various known ways to obstruct sliceness. For instance, the Alexander polynomial of a slice knot factors, up to a unit, as $f(t)f(t^{-1})$ and the averaged signature function of the knot vanishes (see, for instance, \cite[Chapter~8]{Lickorish1997}). Also in recent years, modern techniques in low-dimensional topology have been applied to produce obstructions. Examples include the $\tau$-invariant \cite{Ozsvath2003, Rasmussen2003}, $\epsilon$ \cite{Hom2014} and $\Upsilon$ \cite{Ozsvath2014} invariants, all coming from Heegaard Floer homology \cite{Ozsvath2004a, Ozsvath2013}, and the $s$-invariant \cite{Rasmussen2010} from Khovanov homology \cite{Khovanov2000}. In this paper we introduce a new obstruction using techniques in handlebody theory. We call a $4$-manifold a \emph{2-handlebody} if it may be obtained by attaching $2$-handles to $D^4$. The main ingredient is the following: 
{\thm\label{thm:slicing}
Let $K \subset S^3$ be a smoothly slice knot and $X$ be a spin 2-handlebody with $\partial X = S^3_0(K)$. Then either $b_2(X)=1$ or 
\[
4b_2(X) \ge 5|\sigma(X)|+12.
\]}

A key tool used in the proof of Theorem~\ref{thm:slicing} is Furuta's $10/8$ theorem \cite{Furuta2001}. Our theorem can be regarded as an analogous version of his theorem for manifolds with certain types of boundary. Similar ideas to this paper have been used by Bohr and Lee in \cite{Lee2001}, using the branched double cover of a knot.

Given a knot $K$, we construct a spin 4-manifold $X$ such that $\partial X = S^3_0(K)$. If we think of $0$-surgery on $K$ as the boundary of the manifold given by a single 2-handle attached to $\partial D^4$, the spin structures on $S^3_0(K)$ are in one-to-one correspondence with characteristic sublinks in this Kirby diagram. (See Section~\ref{section:2} for the relevant definitions.) The 0-framed knot $K$ represents a spin structure which does not extend over this $4$-manifold. We may alter the $4$-manifold, without changing the boundary $3$-manifold, by a sequence of blow ups, blow downs and handle slides, until the characteristic link corresponding to this spin structure is the empty sublink. The manifold we obtain is a spin 4-manifold. Now if $b_2$ and $\sigma$ of the resulting four-manifold violate the inequality of Theorem~\ref{thm:slicing}, $K$ is not smoothly slice.

The reason we are interested in the obstruction obtained from Theorem~\ref{thm:slicing} is twofold. First, we show in Section~\ref{sec:4} that our obstruction is able to detect torsion elements in the concordance group; in particular, the obstruction detects the non-sliceness of the figure eight knot. Second, we show that the obstruction is capable of detecting the smooth non-sliceness of topologically slice knots. We remind the reader that a topologically slice knot is a knot in $S^3$ which bounds a locally flat disk in $D^4$. All the algebraic concordance invariants (e.g. the signature function) vanish for a topologically slice knot. 

\section*{Acknowledgements} We would like to thank Matthew Hedden, Adam Levine, and Yi Ni for insightful conversations and their interest in our work. We would also like to thank Jae Choon Cha, David Krcatovich, and Brendan Owens for helpful discussions and their comments on an earlier draft of this paper. Lastly, we thank the referee for useful comments.
\section{The Slicing Obstruction}\label{section:2}
In this section we prove Theorem \ref{thm:slicing} and describe how to produce the spin manifolds used to give slicing obstructions. The argument uses Furuta's $10/8$ Theorem.

{\thm\cite[Theorem~1]{Furuta2001}
Let $W$ be a closed, spin, smooth 4-manifold with an indefinite intersection form. Then \[4b_2(W) \geq 5 |\sigma(W)| +8.\]}

Note that, by Donaldson's diagonalisation theorem \cite{Donaldson1987}, a closed, smooth, spin manifold $W$ can have a definite intersection form only if $b_2(W)=0$.

\begin{proof}[Proof of Theorem \ref{thm:slicing}]
We start by noting that when $K$ is smoothly slice, $S^3_0(K)$ smoothly embeds in $S^4$. (See \cite{Gilmer-Livingston}, for example.) The embedding splits $S^4$ into two spin manifolds $U$ and $V$ with common boundary $S^3_0(K)$. Since $S^3_0(K)$ has the same integral homology as $S^1 \times S^2$, a straightforward argument using the Mayer-Vietoris sequence shows manifolds $U$ and $V$ will have the same homology as $S^2 \times D^2$ and $S^1 \times D^3$ respectively. In particular both spin structures on the three-manifold extend over $V$.

Now, as in \cite[Lemma~5.6]{Donald2015}, if $X$ is a spin 2-handlebody with $\partial X = \partial V$, let $W=X \cup_{S^3_0(K)} -V$. This will be spin and $\sigma(W) = \sigma(X)$ since $\sigma(V)=0$. In addition, we have $\chi(W)=\chi(X)=1+b_2(X)$. Since $H_1(W,X;\mathbb{Q}) \cong H_1(V,Y;\mathbb{Q})=0$ it follows from the exact sequence for the pair $(W,X)$ that $b_1(W)=b_3(W)=0$. Therefore $b_2(W) = b_2(X) -1$. The result follows by applying Furuta's theorem in the case $b_2(X) >1$.
 \end{proof}

The rest of this section provides the background needed to apply the obstruction of Theorem~\ref{thm:slicing}. We refer the reader to \cite{Gompf1999} for a more detailed discussion on spin manifolds and characteristic links.

{\defn \label{spin} A manifold $X$ has a spin structure if its stable tangent bundle $TX\oplus \epsilon^k$, where $\epsilon^k$ denotes a trivial bundle, admits a trivialization over the 1-skeleton of $X$ which extends over the 2-skeleton. A spin structure is a homotopy class of such
trivializations.
}
\\

It can be shown that the definition does not depend on $k$ for $k \ge 1$. An oriented manifold $X$ admits a spin structure if the second Stiefel-Whitney class vanishes, that is $\omega_2(X)=0$. An oriented 3-manifold always admits a spin structure, since its tangent bundle is trivial. We remind the reader that any closed, connected, spin $3$-manifold $(Y, \mathfrak{s})$ is the spin boundary of a $4$-dimensional spin $2$-handlebody. A constructive proof is given in \cite{Kaplan1979}. 

As described in Section~\ref{section:1} we are interested in $0$-surgery on knots. The resulting three-manifold is spin with two spin structures $\mathfrak{s}_0, \mathfrak{s}_1$. Note that one of the spin structures, $\mathfrak{s}_0$, extends to the 4-manifold obtained by attaching a $0$-framed $2$-handle to $D^4$ along the knot. There is another $2$-handlebody (not the one with one $2$-handle that $\mathfrak{s}_0$ extends over) that is also bounded by $S^3_0(K)$ and $\mathfrak{s}_1$ extends over it. We explain how to construct such a four-manifold in what follows. 
{\defn\label{charactersitic} Let $L=\{K_1, ..., K_m\}$ be a framed, oriented link in $S^3$. The linking number $lk(K_i, K_j)$ is defined as the linking number of the two components if $i \neq j$ and is the framing on $K_i$ if $i=j$. A characteristic link $L^{'}\subset L$ is a sublink such that for each $K_i$ in $L$, $lk(K_i, K_i)$ is congruent mod 2 to the total linking number $lk(K_i, L^{'})$.}    
\\

Note that the characteristic links are independent of the choice of orientation of $L$. A framed link is a Kirby diagram for a $2$-handlebody $X$ and the characteristic links are in one-to-one correspondence with spin structures on $\partial X$. The link components form a natural basis for $H_2(X)$ and the intersection form is given by the linking numbers $lk$. The empty link is characteristic if and only if this form is even and, since $2$-handlebodies are simply connected, this occurs if and only if $X$ is spin. A non-empty characteristic link correspond to a spin structure on the boundary which does not extend. We can remove a characteristic link by modifying the Kirby diagram by handle-slide, blow up and blow down moves until it becomes the empty sublink. These do not change the boundary $3$-manifold, but the latter two change the $4$-manifold. This process produces a spin $4$-manifold where the given spin structure extends.

For convenience, we briefly recall how these moves change the framings in link and the effect on a characteristic link. When a component $K_1$ with framing $n_1$ is slid over $K_2$ with framing $n_2$, the new component will be a band sum of $K_1$ and a parallel copy of $K_2$. It will have framing $n_1 + n_2 + 2 lk(K_1,K_2)$, where this linking number is computed using orientations on $K_1$ and $K_2$ induced by the band. The new component will represent the class of $K_1 + K_2$ in $H_2(X)$. Consequently, if $K_1$ and $K_2$ were part of a characteristic link before the slide, the new component will replace them in the new diagram. The most basic blow up move adds a split unknot with framing $\pm 1$. Each characteristic link will change simply by adding this extra component. A general blow up across $r$ parallel strands consists of first adding a split component and then sliding each of the $r$ strands over it. Therefore blowing up positively (respectively negatively), if the linking of the blow up circle with a component of the Kirby diagram is $p$, the framing change on that component will be $p^2$ (respectively $-p^2$). If a blow up curve links a characteristic link non-trivially mod 2 then it does not add any components to the characteristic link. However, if the blow up curve circles $2k$  strands of a characteristic link, it will be added to the characteristic link. Example~\ref{eg:figureeight} (more specifically, Figure~\ref{fig8b} and Figure~\ref{fig8c}) illustrates this. A blow down is the reverse move. Blowing down a component of a characteristic link removes it.

Note that during the process of removing a characteristic link, we do not need to keep track of the whole Kirby diagram. Instead, we need only keep the information about the characteristic link and its framings, along with $b_2$ and $\sigma$. This is straightforward to do by counting the number of blow ups and blow downs with their signs.
\subsection{Obtaining a spin 4-manifold bounded by $S^3_0(K)$}

The argument above suggests that Theorem~\ref{thm:slicing} can give slicing obstructions for a knot $K$ that can be ``efficiently'' unknotted by a sequence of blow-ups. If the characteristic link is an unknot, the framing can be transformed to $\pm 1$ by further blow ups (along meridians) and then we may blow down to get an empty characteristic link.

We finish this section by showing how Theorem~\ref{thm:slicing} can be used to prove that positive $(p, kp\pm1)$ torus knots are not smoothly slice for odd $p \ge 3$ \footnote{There are many ways to show that positive torus knots are not smoothly slice. Our goal in presenting this example is to show that our obstruction works well with \emph{generalized twisted torus knots}, which are, roughly speaking, torus knots where there are full-twists between adjacent strands. See Figure~\ref{fig:k6} for an example of a generalized twisted torus knot.} . Given a zero framed positive $(p, kp\pm1)$ torus knot, we first blow up $k$ times negatively around $p$ parallel strands. Each will introduce a negative full twist and, since $p$ is odd, the characteristic link will be a $-kp^2$ framed unknot. Blowing up $kp^2-1$ times positively along meridians and blowing down once negatively will give us a spin manifold $X$. This sequence used $k$ negative blow ups, $kp^2-1$ positive blow ups and one negative blow down so we see $b_2(X)= 1+k+kp^2-1-1= kp^2+k-1$ and  $\sigma(X) = -k+kp^2-1+1=kp^2-k$. Now, $4b_2(X)-5|\sigma(X)|-12= -kp^2+9k-16<0$, and so such knots are not slice.

\section{Examples}\label{sec:4}

The obstruction from Theorem \ref{thm:slicing} is able to detect knots with order two in the smooth concordance group and can also be used to obstruct topologically slice knots from being smoothly slice. This section describes examples which illustrate each of these properties.

\subsection{Figure eight knot}

\begin{examp}\label{eg:figureeight} The knot $4_1$ is not slice.
\end{examp}

This knot is shown in Figure \ref{fig8a}. Start with the manifold obtained by attaching a $0$-framed $2$-handle to $D^4$ along $4_1$. Blow up the manifold twice as indicated in Figure \ref{fig8b}. Sliding one of the two blow up curves over the other results in the diagram in Figure~\ref{fig8c}. The characteristic link is a split link whose components are a $0$-framed trefoil and a $-2$-framed unknot.

Figure \ref{fig8d} shows just the characteristic link. Blowing up negatively once more changes the characteristic link to a $2$-component unlink with framings $-2$ and $-9$ as in Figure \ref{fig8e}. This is inside a $4$-manifold with signature $-3$ and second Betti number $4$. Positively blowing up meridians nine times changes both framings in the characteristic link to $-1$ and blowing down each of them results in a spin manifold. Counting blow-up and blow-down moves, we see that the signature of this spin manifold is $+8$ and the second Betti number is $11$. Theorem \ref{thm:slicing} then applies.

\begin{figure}[t!] 
\label{fig8}
\subfigure{\label{fig8a}
\def\svgwidth{3cm}
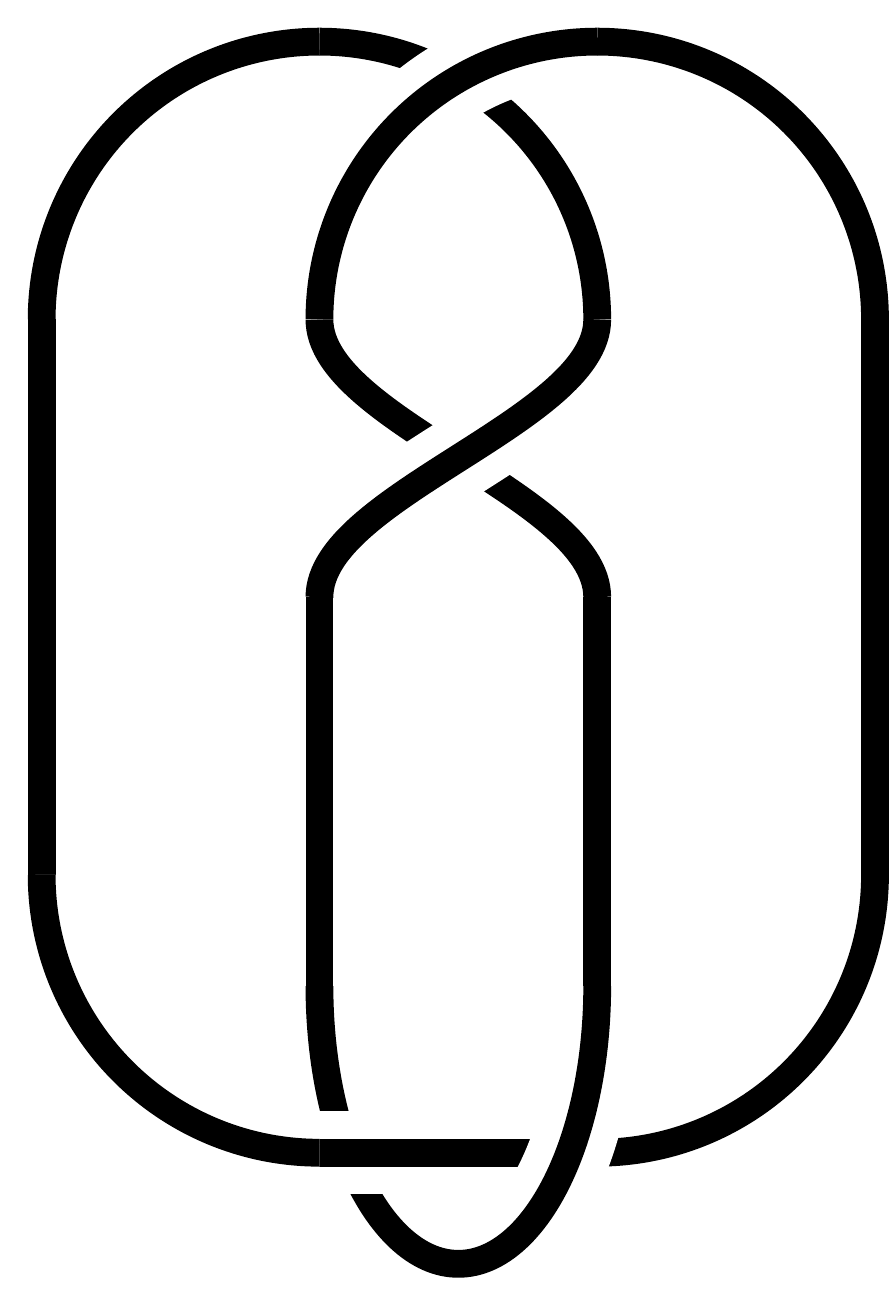{(a)}}
~~~~~
\subfigure{\label{fig8b}
\def\svgwidth{4cm}
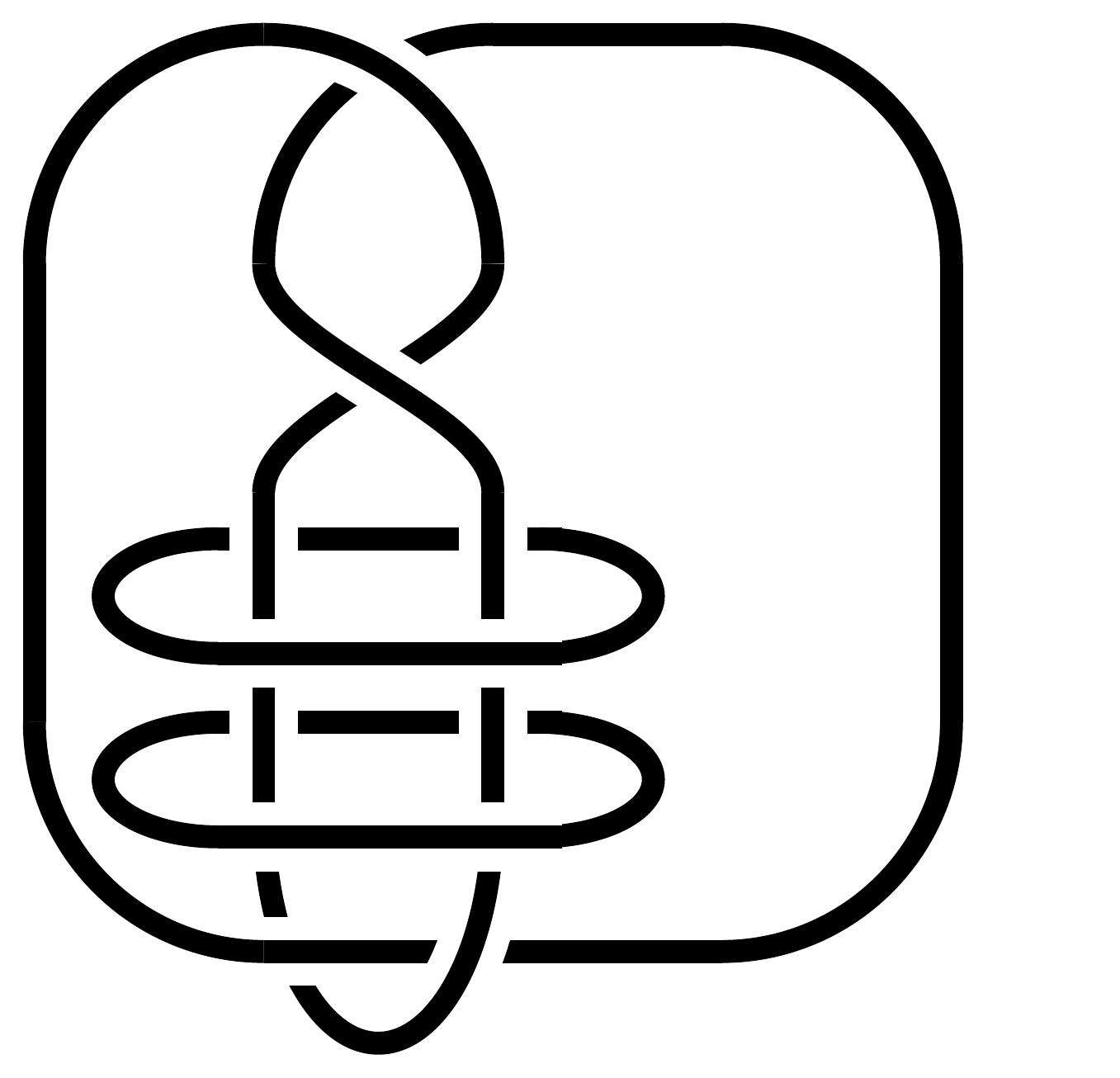{(b)}}
~~~
\subfigure{\label{fig8c}
\def\svgwidth{4cm}
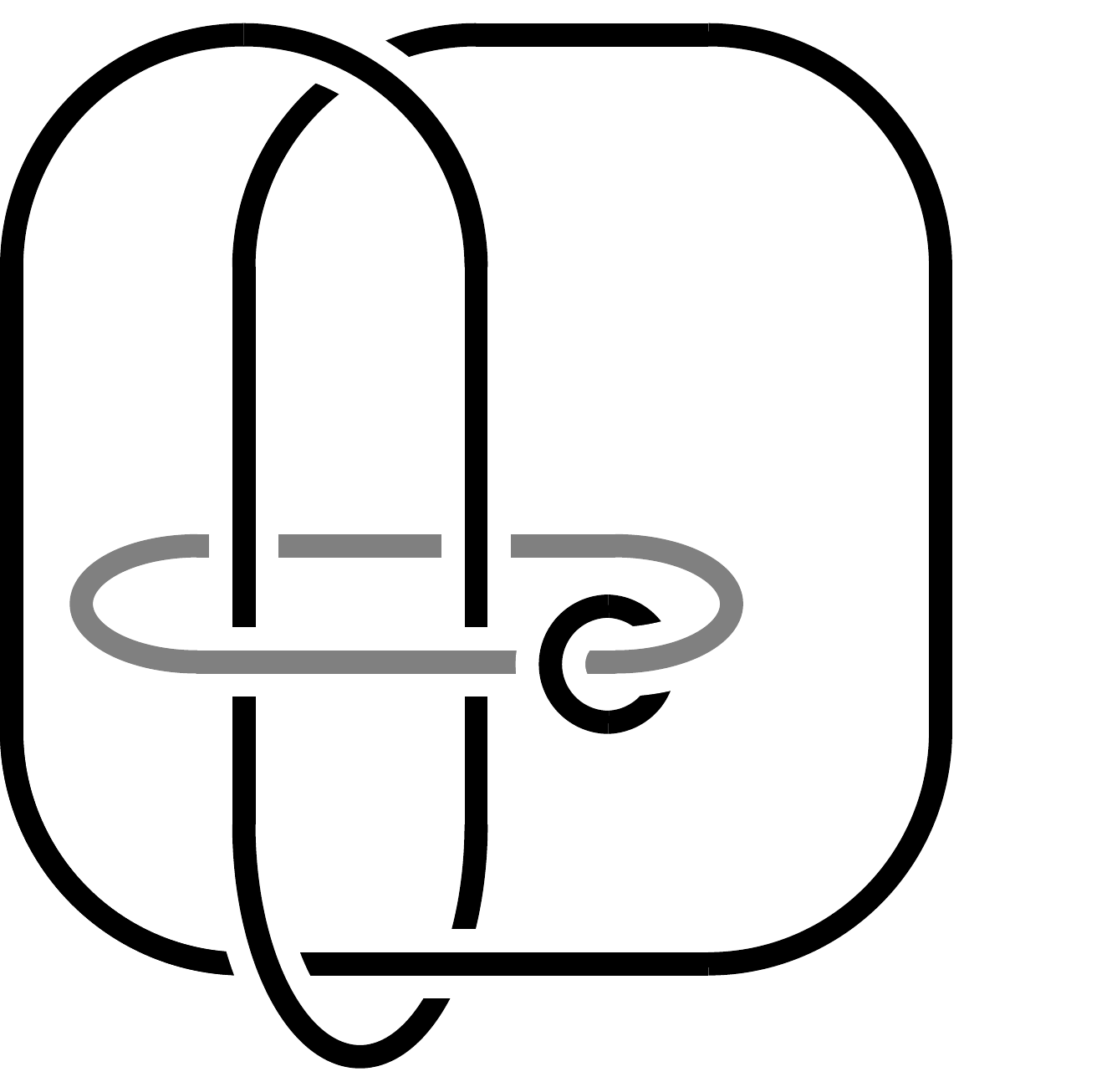{(c)}}
\par\bigskip
\subfigure{\label{fig8d}
\def\svgwidth{5cm}
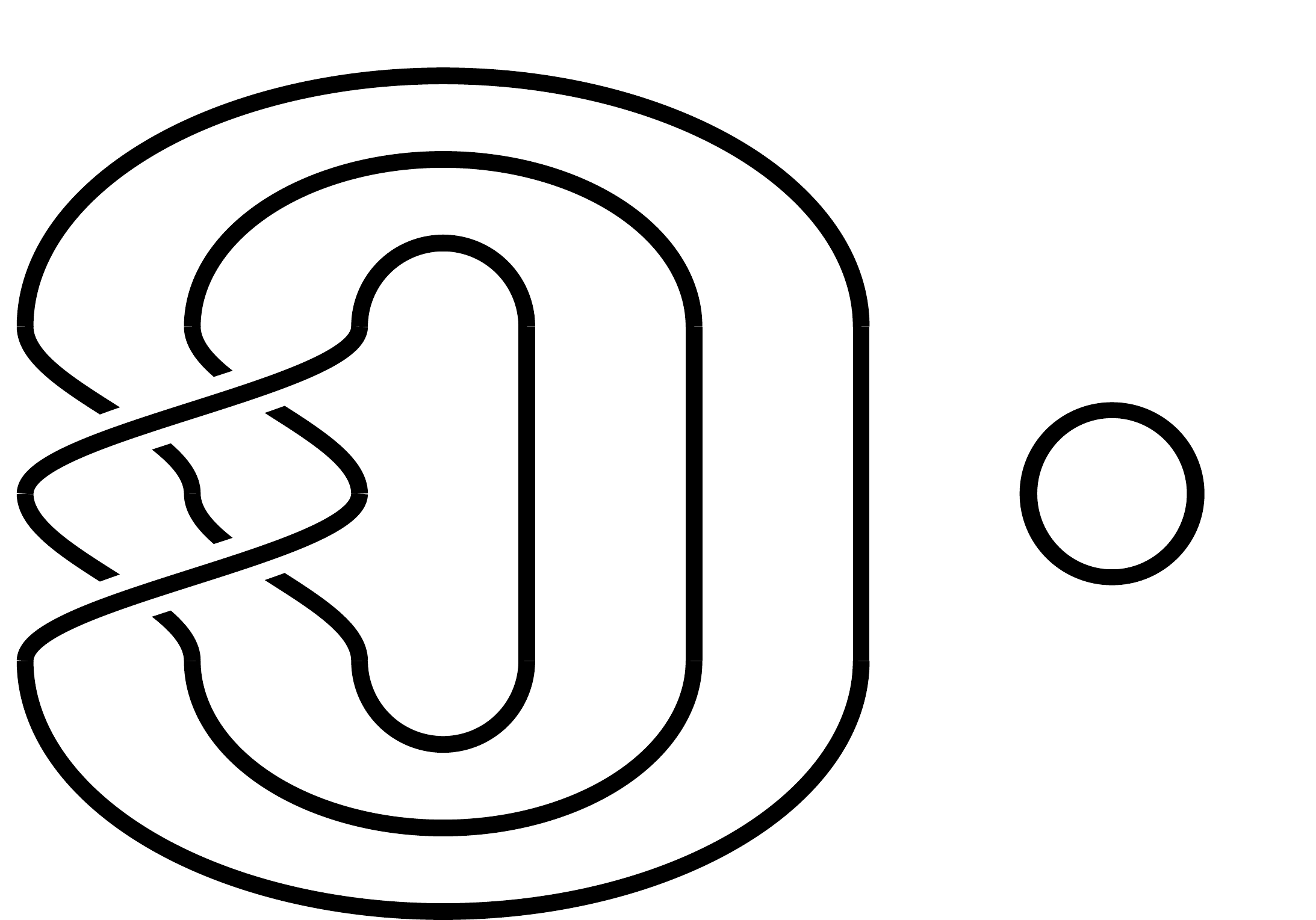{(d)}}
~~~
\subfigure{\label{fig8e}
\def\svgwidth{5cm}
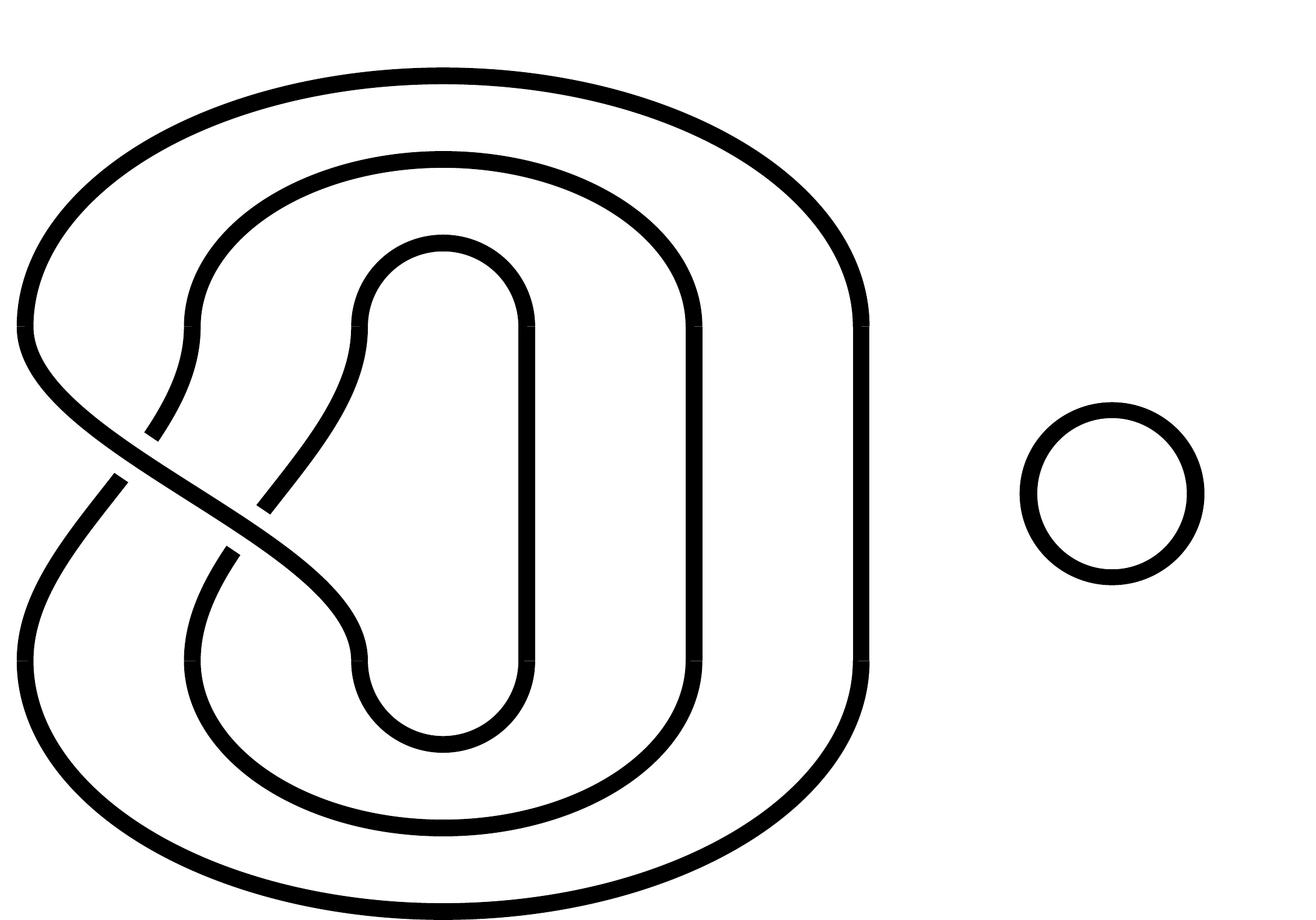{(e)}}
\caption{A sequence of blow up and blow downs showing that $S^3_0(4_1)$ bounds a spin manifold with $b_2=11$ and $\sigma=8$. The characteristic link at each stage is specified by darker curves.}

\end{figure}

This example shows that Theorem \ref{thm:slicing} may obstruct sliceness of $K$ but not of $K\#K$. The following result describes how the obstruction behaves with respect to connected sums. For any knot $K$, let $\mathfrak{s}_1$ denote the spin structure on $S^3_0(K)$ which does not extend over the $4$-manifold produced by attaching a $0$-framed $2$-handle to $D^4$ along $K$.

\begin{prop}\label{prop:sums}

Let $K_1, K_2$ be knots and $X_i$ be a smooth spin 2-handlebody with boundary $(S^3_0(K_i), \mathfrak{s}_1)$ for $i=1,2$.

There is a smooth spin 2-handlebody $X$ with $\partial X = (S^3_0(K_1\# K_2), \mathfrak{s}_1)$, $\sigma(X) = \sigma(X_1)+\sigma(X_2)$ and $b_2(X) = b_2(X_1) + b_2(X_2) + 1$.

\end{prop}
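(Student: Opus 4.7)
The plan is to construct $X$ by running the Kirby-calculus constructions of $X_1$ and $X_2$ in parallel on an enlarged Kirby diagram that already has $S^3_0(K_1 \# K_2)$ as its boundary.

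First, introduce the three-component framed link $L_0 = (K_1, 0) \sqcup (K_2, 0) \sqcup (C, 0)$ in $S^3$, where $K_1$ and $K_2$ lie in disjoint $3$-balls and $C$ is a small unknot with $lk(K_1, C) = +1$ and $lk(K_2, C) = -1$. Let $Z_0$ be the associated $2$-handlebody, so that $b_2(Z_0) = 3$ and $\sigma(Z_0) = 0$. A short sequence of three handle slides, replacing $K_1$ successively by $K_1 + C$, by $K_1 + K_2$, and by $K_1 - C$, converts $L_0$ into the split union of a $0$-framed copy of $K_1 \# K_2$ and a $0$-framed Hopf link of linking $-1$. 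Since the Hopf-link piece contributes only a punctured $S^2 \times S^2$ (with $S^3$ boundary), one gets $\partial Z_0 \cong S^3_0(K_1 \# K_2)$. Propagating the characteristic sublink $\{K_1 \# K_2\}$ backwards through these slides identifies $\{K_1, K_2\}$ as the characteristic sublink of $L_0$ corresponding to the spin structure $\mathfrak{s}_1$.

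Next, let $\sigma_i$ denote the Kirby-move sequence (blow-ups, blow-downs, handle slides) used to construct $X_i$ from the single-component diagram $(K_i, 0)$ by emptying its characteristic sublink $\{K_i\}$. Apply $\sigma_1$ and then $\sigma_2$ to $L_0$, with each $\sigma_i$ operating on its own $K_i$-portion. The auxiliary curves introduced by $\sigma_i$ can be isotoped into a small ball around $K_i$ disjoint from $C$ and from the other $K_j$, so the Kirby calculus proceeds as in the standalone construction of $X_i$. Using the evolution rules from Section~\ref{section:2}, the characteristic sublink changes as $\{K_1, K_2\} \to \{K_2\} \to \emptyset$, so the resulting $4$-manifold $X$ is a spin $2$-handlebody with $\partial X = (S^3_0(K_1 \# K_2), \mathfrak{s}_1)$. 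A direct count gives
\[
b_2(X) = 3 + (b_2(X_1) - 1) + (b_2(X_2) - 1) = b_2(X_1) + b_2(X_2) + 1
\]
and $\sigma(X) = \sigma(X_1) + \sigma(X_2)$, as required.

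The main technical step is verifying that the characteristic sublink evolves as claimed during each $\sigma_i$. The subtle point is that the final blow-down in $\sigma_i$ shifts the framing of $C$ by an odd integer (since $lk(K_i, C) = \pm 1$), but this odd shift is exactly what is needed to maintain the mod-$2$ condition defining a characteristic sublink, given that $lk(C, K_j)$ is also odd for the surviving $K_j$. The verification reduces to a direct bookkeeping exercise using the rules recalled in Section~\ref{section:2}.
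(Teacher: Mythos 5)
Your $Z_0$ is precisely the Kirby picture of the paper's construction: the three\nobreakdash-component diagram $(K_1,0)\sqcup(K_2,0)\sqcup(C,0)$ presents $\bigl(D^4\cup h_{K_1}\cup h_{K_2}\bigr)\cup_Y W$, where $W$ is the $2$-handle cobordism from $Y=S^3_0(K_1)\#S^3_0(K_2)$ to $S^3_0(K_1\#K_2)$ given by attaching a $0$-framed handle along the splitting circle $C$. The paper simply glues $W$ onto $X_1\natural X_2$: the spin structures match because the attaching circle has total linking zero with the characteristic link, $\sigma$ adds by Novikov additivity since $\sigma(W)=0$, and $b_2$ is an Euler characteristic count. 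Your version instead tries to rebuild $X_1$ and $X_2$ by Kirby moves inside the enlarged diagram, and that is where the gaps are. A preliminary one: the hypothesis hands you the manifolds $X_i$, not move sequences $\sigma_i$; to obtain a sequence of blow-ups, blow-downs and slides carrying $(K_i,0)$ with characteristic sublink $\{K_i\}$ to an even diagram of $X_i$ with empty characteristic sublink, you must invoke Kirby's theorem together with the naturality of the characteristic-sublink/spin-structure correspondence. That is fixable but unstated.

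The more serious gap is the ``parallel execution'' step. Since $lk(C,K_i)=\pm1$, the curve $C$ meets every ball containing $K_i$, so the auxiliary curves of $\sigma_i$ cannot be made ``disjoint from $C$'' in any sense that decouples the two computations: the blow-up curves must encircle strands of $K_i$, and the curves later blown down may link $C$. Each blow-up or blow-down of a curve $\gamma$ with $lk(\gamma,C)=\ell$ shifts the framing of $C$ by $\pm\ell^2$ and re-embeds $C$, so whether the terminal framing of $C$ is even --- equivalently, whether the terminal $4$-manifold is actually spin with the correct boundary spin structure --- is exactly the content of the proposition. Your argument asserts this only for ``the final blow-down,'' tacitly assuming that exactly one such move links $C$ oddly; in general several blown-up and blown-down curves can link $C$, and you give no reason the total contribution over all of $\sigma_i$ is odd. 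This parity statement is the crux, and deferring it to ``a direct bookkeeping exercise'' leaves the proof incomplete. The cleanest repair is to abandon the diagrammatic bookkeeping and argue at the level of manifolds, as the paper does: set $X=(X_1\natural X_2)\cup_Y W$, observe that the spin structures on $X_1$, $X_2$ and $W$ glue because the new handle's attaching circle has linking zero with the characteristic knot, and read off $\sigma$ and $b_2$ from additivity.
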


\begin{proof}
Let $W$ be the 2-handle cobordism from $Y=S^3_0(K_1) \# S^3_0(K_2)$ to $S^3_0(K_1\#K_2)$ illustrated in Figure \ref{nicecobordism}.
Let $X$ be the manifold constructed by attaching $W$ to $X_1 \natural X_2$ along $Y$.

\begin{figure}[htbp] 
\begin{center}
\small
\def\svgwidth{12cm}
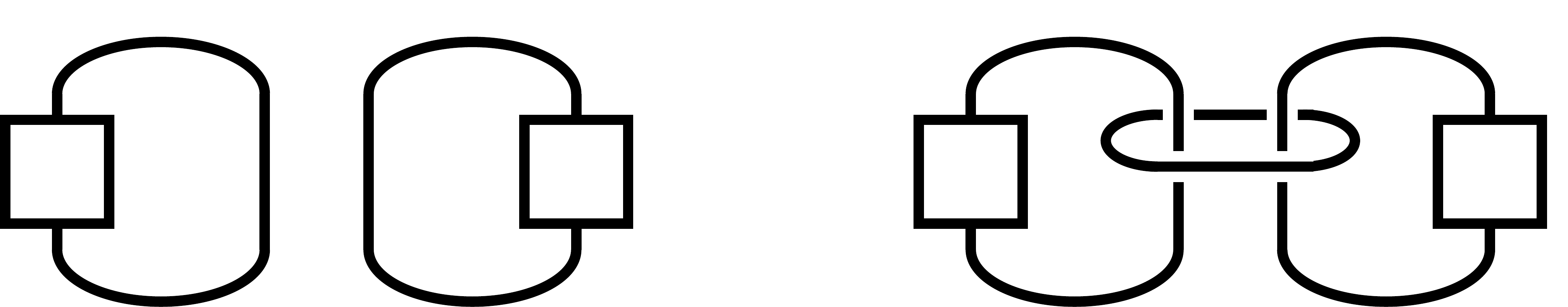
\caption{
{$2$-handle cobordism $W:S^3_0(K_1) \# S^3_0(K_2) \to S^3_0(K_1\#K_2)$.}}
\label{nicecobordism}
\end{center}
\end{figure}

The characteristic link for the spin structure $\mathfrak{s}_1$ in $Y$ is the knot $K_1 \# K_2$ and, since the new 2-handle has linking zero with this component, there is a spin structure on $W$ which restricts to $\mathfrak{s}_1 \# \mathfrak{s}_1$ on $Y$ and $\mathfrak{s}_1$ on $S^3_0(K_1\#K_2)$. Consequently, $X$ extends the correct spin structure on its boundary.

It is easy to see that $\sigma(W) = 0$ and so $\sigma(X) = \sigma(X_1) + \sigma(X_2)$. 
Since $X_1, X_2$ and $X$ are all 2-handlebodies
\[b_2(X) = \chi(X) -1 = \chi(X_1 \natural X_2) +\chi(W) -1 = 1+b_2(X_1) + b_2(X_2).\]

\end{proof}

\begin{rmk}
The signature of any spin manifold with spin boundary $(S^3_0(K),\mathfrak{s}_1)$ is $8 \operatorname{Arf} K \mod 16$, where $\operatorname{Arf} K$ is the Arf invariant of the knot $K$. (See \cite{Saveliev2002}.) Note that after removing the characteristic link, to get to a spin manifold bounded by the $0$-surgery on $K$, the signature must be a multiple of $8$.
\end{rmk}
\subsection{A topologically slice example}

Let $K$ be the knot shown in Figure \ref{fig:k6}. A straightforward calculation of the Alexander polynomial shows that $\Delta_K(t)=1$ and so $K$ is topologically slice. See~\cite[11.7B~Theorem]{Freedman1990}, \cite[Theorem~7]{Freedman1984}. See also~\cite[Appendix~A]{Garoufalidis2004} and \cite{Cha2014}.

\begin{figure}[htbp] 
\begin{center}
\small
\def\svgwidth{10cm}
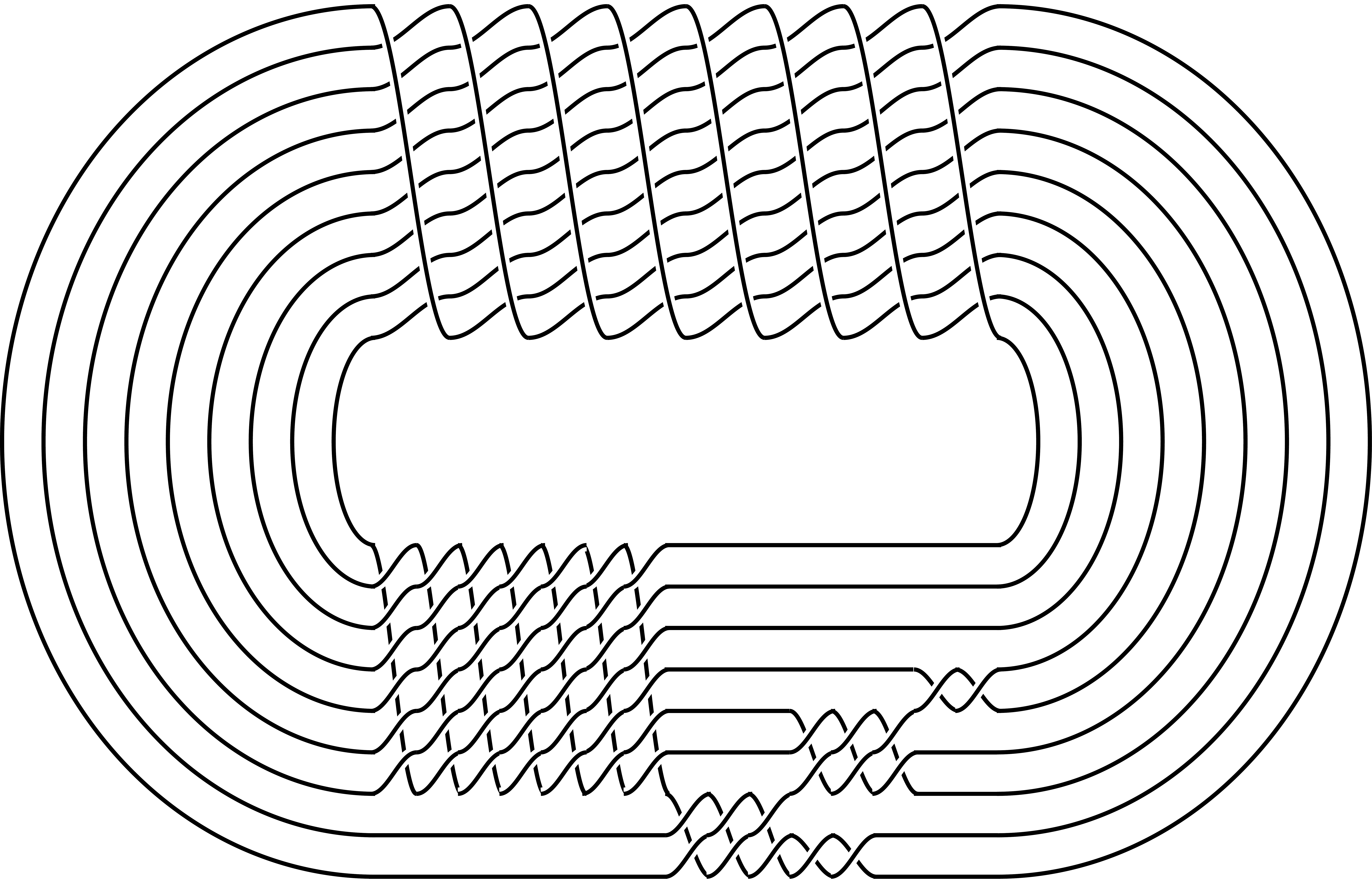
\caption{
{A topologically slice knot obtained as the closure of the braid word $(\sigma_8 \sigma_7 ... \sigma_1)^8 (\sigma_{3}\sigma_{4} ... \sigma_{8})^{-7} (\sigma_{1}\sigma_{2})^{-3} \sigma_{1}^{-2}(\sigma_{3}\sigma_{4})^{-3}\sigma_{5}^{-2}$.}}
\label{fig:k6}
\end{center}
\end{figure}

\begin{examp}
$K$ is not smoothly slice.
\end{examp}

Add a $0$-framed $2$-handle to $\partial D^4$ along $K$ and then blow up three times around the sets of strands indicated in Figure \ref{fig:k6BU}. Blow up negatively across nine strands on the top and positively across five and seven strands on the bottom of the diagram. This gives a manifold with signature $1$ and second Betti number $4$. The characteristic link has one component, as shown in Figure \ref{fig:k6BU2}, with framing $-7$. An isotopy verifies that this knot is $4_1$.

\begin{figure}[htbp] 
\begin{center}
\small
\def\svgwidth{10cm}
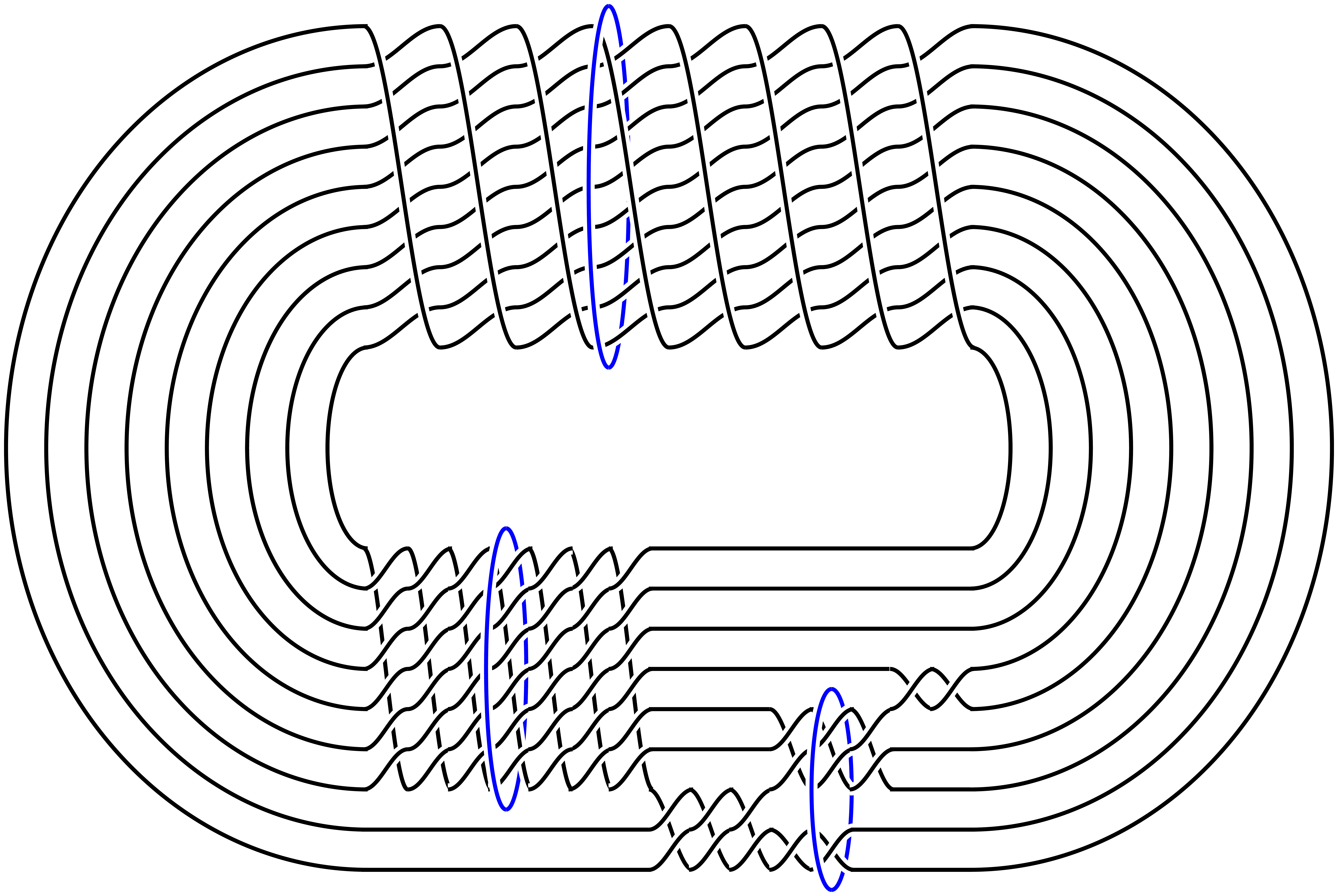
\caption{
{$K$ can be simplified by blowing up along the blue curves with appropriate signs. Note that none of the blue curves will be part of the characteristic link.}}
\label{fig:k6BU}
\end{center}
\end{figure}

\begin{figure}[htbp] 
\begin{center}
\small
\def\svgwidth{8cm}
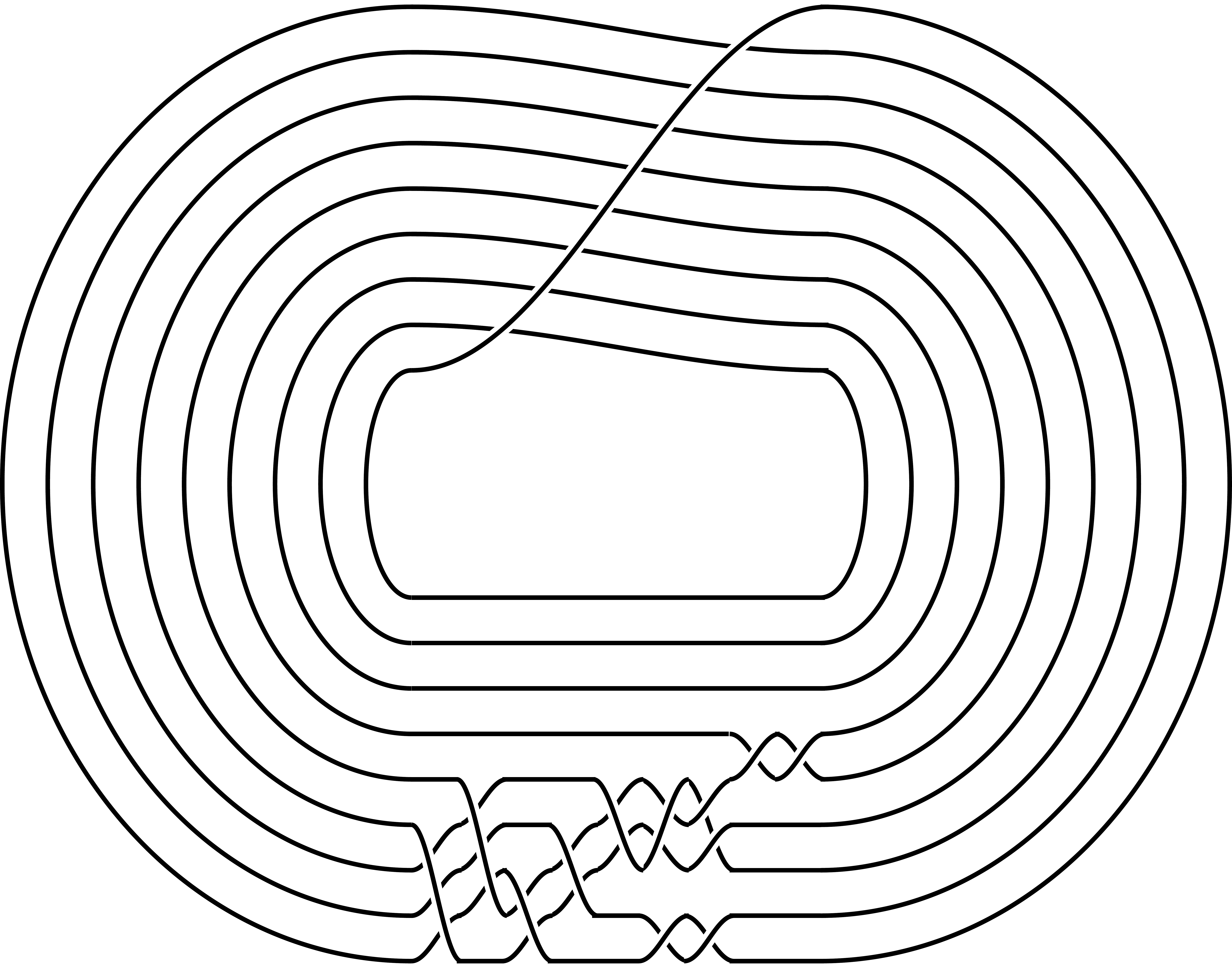
\caption{
{Characteristic link is a $-7$-framed figure-eight. }}
\label{fig:k6BU2}
\end{center}
\end{figure}

Following the procedure from Example \ref{eg:figureeight}, we may blow up negatively three times to produce a characteristic link which is a two-component unlink with framings $-2$ and $-16$ in a manifold with $\sigma=-2$ and $b_2=7$. Blow up meridional curves of this unlink until the framing coefficients are both $-1$, then blow down the resulting $-1$-framed unlink. This yields a spin manifold with signature $16$ and second Betti number $21$. Therefore, by Theorem~\ref{thm:slicing}, $K$ is non-slice.

Note that Figure \ref{fig:k6} presents $K$ as a generalized twisted torus knot. It is the closure of a braid formed by taking a $(9,8)$ torus knot and then adding negative full twists on seven strands, then on non-adjacent sets of three strands and finally a pair of negative clasps. The obstruction from Theorem \ref{thm:slicing} is generally easier to apply to knots like this because they can be unknotted efficiently by blowing up to remove full twists. For many twisted torus knots this provides a slicing obstruction which is often more easily computable than the signature function.

It would be interesting to find other examples where this obstruction applies. It may be able to obstruct smooth sliceness for Whitehead doubles. To apply Theorem~\ref{thm:slicing}, we need the sequence of blow-up moves to predominantly involve blow-ups of the same sign.  However, at least for the standard diagrams of Whitehead doubles, it is not easy to see how to do this. Similarly, it should be possible to detect other torsion elements of the knot concordance group. Example~\ref{eg:figureeight} demonstrates this in principle but it would be interesting to obtain new examples of torsion elements.




\bibliographystyle{amsalpha2}

\bibliography{Reference}

\end{document}